\newtheorem{theorem}{Theorem}[section]
\theoremstyle{plain}
\newtheorem{corollary}{Corollary}[section]
\newtheorem{definition}{Definition}[section]
\newtheorem{lemma}{Lemma}[section]
\numberwithin{equation}{section}
\begin{document}
\title{A new kind of slant helix in Pseudo-Riemannian Manifolds}
\author{Evren Z\i plar}
\address{ Department of Mathematics, Faculty of Science, \c{C}ank\i r\i \
Karatekin University, \c{C}ank\i r\i , Turkey}
\email{evrenziplar@karatekin.edu.tr}
\urladdr{}
\author{Yusuf Yayl\i }
\address{Department of Mathematics, Faculty of Science, University of
Ankara, Tando\u{g}an, Turkey}
\email{yayli@science.ankara.edu.tr}
\author{\.{I}smail G\"{o}k}
\address{Department of Mathematics, Faculty of Science, University of
Ankara, Tando\u{g}an, Turkey}
\email{igok@science.ankara.edu.tr}
\urladdr{}
\date{May 30, 2013.}
\subjclass[2000]{14H45, 14H50, 53A04}
\keywords{Eikonal slant helice, harmonic curvature.}

\begin{abstract}
In this paper, we define a new kind of slant helix called $f$-eikonal $V_{n}$%
-slant helix in Pseudo- Riemannian manifolds and give the definition of
harmonic curvature functions related to the $f$-eikonal $V_{n}$-slant helix
in Pseudo- Riemannian manifolds. Moreover, we give some characterizations of 
$f$-eikonal $V_{n}$-slant helix by making use of the harmonic curvature
functions.
\end{abstract}

\maketitle

\section{\textbf{Introduction}}

Curves theory is an important framework in the differential geometry
studies. Helix is one of the most fascinating curves because we see helical
structure in nature, science and mechanical tools. Helices arise in the
field of computer aided design, computer graphics, the simulation of
kinematic motion or design of highways, the shape of DNA and carbon
nonotubes. Also, we can see the helical structure in fractal geometry, for
instance hyperhelices ( \cite{jain, scarr, yin}).

A curve of constant slope or general helix in Euclidean 3-space $E^{3},$ is
defined by the property that its tangent vector field makes a constant angle
with a fixed straight line (the axis of general helix). A classical result
stated by Lancret in 1802 and first proved by de Saint Venant in 1845 (\cite%
{Lancret} and \cite{Struik}) is: A necessary and sufficient condition that a
curve be a general helix is that the ratio of curvature to torsion be
constant. In \cite{haci}$,$ \"{O}zdamar and Hac\i saliho\u{g}lu defined
harmonic curvature functions $H_{i}$ $\left( 1\leq i\leq n-2\right) $ of a
curve $\alpha $ and generalized helices in $E^{3}$ to in $n-$dimensional
Euclidean space $E^{n}$. \ Moreover, they gave a characterization for the
inclined curves in $E^{n}$ : 
\begin{equation}
\text{\textquotedblleft A curve is an inclined curve if and only if }\dsum
\limits_{i=1}^{n}H_{i}^{2}=\text{constant\textquotedblright }
\end{equation}

Izumiya and Takeuchi defined a new kind of helix (slant helix) and they gave
a characterization of slant helices in Euclidean $3-$space $E^{3}$ \cite%
{izumiya}. In 2008, \"{O}nder \emph{et al}. defined a new kind of slant
helix in Euclidean $4-$space $E^{4}$ which is called $B_{2}-$slant helix and
they gave some characterizations of these slant helices in Euclidean $4-$%
space $E^{4}$ \cite{onder} . And then in 2009, G\"{o}k \emph{et al}.
generalized $B_{2}-$slant helix in $E^{4}$ to $E^{n}$, $n>3$, called $V_{n}-$%
slant helix in Euclidean and Minkowski $n$-space (\cite{gok, gok1}). Lots of
authors in their papers have investigated inclined curves and slant helices
using the harmonic curvature functions in Euclidean and Minkowski $n$-space (%
\cite{ali, ali1, klah, oz}). But, \c{S}enol et al.(\cite{zip}) see for the
first time that the characterization of inclined curves and slant helices in 
$(1.1)$ is true only for the case necessity but not true for the case
sufficiency in Euclian $n$-space. Then, they consider the
pre-characterizations about inclined curves and slant helices and
restructure them with the necessary and sufficient condition.

Let $M$ be a Riemannian manifold, where $\left \langle ,\right \rangle $ is
the metric. Let $f:M\rightarrow 
\mathbb{R}
$ be a function and let $\nabla f$ be its gradient, i.e., $%
df(X)=\left
\langle \nabla f,X\right \rangle $. We say that $f$ is eikonal
if it satisfies: $\left \Vert \nabla f\right \Vert $ is constant \cite{scala}
. $\nabla f$ is used many areas of science such as mathematical physics and
geometry. So, $\nabla f$ is very important subject. For example, the
Riemannian condition $\left \Vert \nabla f\right \Vert ^{2}=1$ (for
non-constant $f$ on connected $M$) is precisely the eikonal equation of
geometrical optics. Thus on a connected $M$, a non-constant real valued $f$
is Riemannian iff $f$ satisfies this eikonal equation. In the geometrical
optical interpretation, the level sets of $f$ are interpreted as wave
fronts. The characteristics of the eikonal equation (as a partial
differential equation), are then the solutions of the gradient flow equation
for $f$ (an ordinary differential equation), $x^{\prime }=\func{grad}f(x)$,
which are geodesics of $M$ orthogonal to the level sets of $f$, and which
are parametrized by arc length. These geodesics can be interpreted as light
rays orthogonal to the wave fronts \cite{Fischer} .

In this paper, we define $f$-eikonal $V_{n}$-slant helix in $n$-dimensional
pseudo-Riemannian manifolds and give the definition of harmonic curvature
functions related to $f$-eikonal $V_{n}$-slant helix in $n$-dimensional
pseudo-Riemannian manifolds. Moreover, we give some characterizations of $f$%
-eikonal $V_{n}$-slant helix by making use of the harmonic curvature
functions.

\section{\textbf{Preliminaries}}

In this section, we give some basic definitions from differential geometry.

\begin{definition}
A metric tensor $g$ on a smooth manifold $M$ is a symmetric non-degenerate
(0,2) tensor field on $M$.

In other words, $g\left( X,Y\right) =g\left( Y,X\right) $ for all $X,Y\in TM$
(tangent bundle) and at the each point $p$ of $M$ if $g\left(
X_{p},Y_{p}\right) =0$ for all $Y_{p}\in T_{p}\left( M\right) $ , then $%
X_{p}=0$ (non-degenerate), where $T_{p}\left( M\right) $ is the tangent
space of $M$ at the point $p$ and $g:T_{p}\left( M\right) \times T_{p}\left(
M\right) \rightarrow 
\mathbb{R}
$ \cite{neill} .
\end{definition}

\begin{definition}
A pseudo-Riemannian manifold (or semi-Riemannian manifold) is a smooth
manifold $M$ furnished with a metric tensor $g$. That is, a
pseudo-Riemannian manifold is an ordered pair $\left( M,g\right) $ \cite%
{neill} .
\end{definition}

\begin{definition}
We shall recall the notion of a proper curve of order $n$ in a $n$%
-dimensional pseudo-Riemannian manifold $M$ with the metric tensor $g$. Let $%
\alpha :I\rightarrow M$ be a non-null curve in $M$ parametrized by the
arclength $s$, where $I$ is an open interval of the real line $%
\mathbb{R}
$. We denote the tangent vector field of $\alpha $ by $V_{1}$. We assume
that $\alpha $ satisfies the following Frenet formula: 
\begin{eqnarray*}
\nabla _{V_{1}}V_{1} &=&k_{1}V_{2}, \\
\nabla _{V_{1}}V_{i} &=&-\varepsilon _{i-2}\varepsilon
_{i-1}k_{i-1}V_{i-1}+k_{i}V_{i+1},\text{ }1<i<n \\
\nabla _{V_{1}}V_{n} &=&-\varepsilon _{n-2}\varepsilon _{n-1}k_{n-1}V_{n-1},
\end{eqnarray*}%
where%
\begin{eqnarray*}
k_{1} &=&\left \Vert \nabla _{V_{1}}V_{1}\right \Vert >0 \\
k_{i} &=&\left \Vert \nabla _{V_{1}}V_{i}+\varepsilon _{i-2}\varepsilon
_{i-1}k_{i-1}V_{i-1}\right \Vert >0,\text{ \ }2\leq i\leq n-1 \\
\varepsilon _{j-1} &=&g\left( V_{j},V_{j}\right) \text{ }\left( =\pm
1\right) \text{ },\text{ }1\leq j\leq n,\text{on }I\text{, }
\end{eqnarray*}%
and $\nabla $ is Levi-Civita connection of $M$.

We call such a curve a proper curve of order $n$, $k_{i}$ $\left( 1\leq
i\leq n-1\right) $ its $i-th$ curvature and $V_{1},...,V_{n}$ its Frenet
Frame field.

Morever, let us recall that $\left \Vert X\right \Vert =\sqrt{\left \vert
g\left( X,X\right) \right \vert }$ for $X\in TM$, where $TM$ is the tangent
bundle of $M$ \cite{song} .
\end{definition}

\section{$f$\textbf{-eikonal }$V_{n}$\textbf{-slant helix curves in
pseudo-Riemannian manifolds}}

In this section, we define $f$-eikonal $V_{n}$-slant helix curves and we
give characterizations for a $f$-eikonal $V_{n}$-slant helix curve in $n$%
-dimensional pseudo-Riemannian manifold $M^{n}$ by using harmonic curvature
functions in terms of $V_{n}$ of the curve.

\begin{definition}
\textbf{\ }Let $M$ be a $n$-dimensional pseudo-Riemannian manifold and let $%
\alpha \left( s\right) $ be a proper curve of order $n$ (non-null) with the
curvatures $k_{i}$ $\left( i=1,...,n-1\right) $ in $M$. Then, harmonic
curvature functions of $\alpha $ are defined by 
\begin{equation*}
H_{i}^{\ast }:I\subset 
\mathbb{R}
\rightarrow 
\mathbb{R}%
\end{equation*}%
along $\alpha $ in $M$, where%
\begin{eqnarray*}
H_{0}^{\ast } &=&0, \\
H_{1}^{\ast } &=&\varepsilon _{n-3}\varepsilon _{n-2}\frac{k_{n-1}}{k_{n-2}},
\\
H_{i}^{\ast } &=&\left( k_{n-i}H_{i-2}^{\ast }-\nabla _{V_{1}}H_{i-1}^{\ast
}\right) \frac{\varepsilon _{n-\left( i+2\right) }\varepsilon _{n-\left(
i+1\right) }}{k_{n-\left( i+1\right) }},\text{ }2\leq i\leq n-2\text{.}
\end{eqnarray*}%
Note that $\nabla _{V_{1}}H_{i-1}^{\ast }=V_{1}\left( H_{i-1}^{\ast }\right)
=H_{i-1}^{\ast \prime }$.
\end{definition}

\begin{definition}
Let $\left( M,g\right) $ be a $n$-dimensional pseudo-Riemannian manifold .
Let $f\in \digamma \left( M\right) $ and $\nabla f$ be its gradient, i.e. 
\begin{equation*}
g\left( \nabla f,X\right) =df(X)=X\left( f\right)
\end{equation*}%
for all $X\in TM$, where $\digamma \left( M\right) $ is the set of all
smooth real-valued functions on $M$. Then, we say that $f$ is eikonal
function if $f$ satisfies the eikonal equation%
\begin{equation*}
g\left( \nabla f,\nabla f\right) =\text{constant.}
\end{equation*}
\end{definition}

\begin{lemma}
The Hessian $H^{f}$ of $f\in \digamma \left( M\right) $ is the symmetric
(0,2) tensor field such that%
\begin{equation*}
H^{f}\left( X,Y\right) =g\left( \nabla _{X}\left( \func{grad}f\right)
,Y\right) \text{,}
\end{equation*}%
where $\left( M,g\right) $ is a pseudo-Riemannian manifold and $\nabla $ is
Levi-Civita connection of $M$ \cite{neill} .

The above Lemma has the following corollary.
\end{lemma}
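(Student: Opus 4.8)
The plan is to establish two things: that $H^{f}$ is genuinely a $(0,2)$ tensor field, i.e.\ $\digamma\left(M\right)$-bilinear in the arguments $X,Y$, and that it is symmetric. Bilinearity in the second slot is immediate, since $g$ is $\digamma\left(M\right)$-linear in each argument and $\func{grad}f$ does not depend on $Y$. Bilinearity in the first slot follows from the tensorial character of the connection in its subscript: for $h\in\digamma\left(M\right)$ one has $\nabla_{hX}\left(\func{grad}f\right)=h\,\nabla_{X}\left(\func{grad}f\right)$, whence $H^{f}\left(hX,Y\right)=h\,H^{f}\left(X,Y\right)$, and additivity in both slots is clear. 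This shows $H^{f}$ is a well-defined $(0,2)$ tensor field, so that its value at a point depends only on the pointwise values of $X$ and $Y$.

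The heart of the matter is the symmetry. The definition $H^{f}\left(X,Y\right)=g\left(\nabla_{X}\left(\func{grad}f\right),Y\right)$ treats $X$ and $Y$ asymmetrically, so I would first re-express it in a form where the two defining properties of the Levi-Civita connection can be brought to bear. Using metric compatibility, $X\,g\left(\func{grad}f,Y\right)=g\left(\nabla_{X}\left(\func{grad}f\right),Y\right)+g\left(\func{grad}f,\nabla_{X}Y\right)$, together with the defining property $g\left(\func{grad}f,Z\right)=Z\left(f\right)$ of the gradient, I obtain $H^{f}\left(X,Y\right)=X\left(Yf\right)-\left(\nabla_{X}Y\right)\left(f\right)$. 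Writing the analogous expression for $H^{f}\left(Y,X\right)$ and subtracting gives $H^{f}\left(X,Y\right)-H^{f}\left(Y,X\right)=\left[X,Y\right]\left(f\right)-\left(\nabla_{X}Y-\nabla_{Y}X\right)\left(f\right)$.

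At this point the torsion-free property $\nabla_{X}Y-\nabla_{Y}X=\left[X,Y\right]$ of the Levi-Civita connection makes the two terms cancel, yielding $H^{f}\left(X,Y\right)=H^{f}\left(Y,X\right)$. The only real subtlety, and the step I would be most careful about, is the correct invocation of metric compatibility and of the torsion-free identity in the pseudo-Riemannian setting; since both hold verbatim for the metric connection of any non-degenerate metric $g$ (the signature and the coefficients $\varepsilon_{j}$ playing no role here), the argument goes through unchanged. I would conclude by observing that $H^{f}$ being a symmetric $(0,2)$ tensor field is exactly the assertion of the lemma, which then sets up the corollary announced in its final sentence.
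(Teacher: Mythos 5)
Your proposal is correct, but note that the paper itself offers no proof of this Lemma at all: it is stated as a known fact and attributed to O'Neill's \emph{Semi-Riemannian Geometry} via the citation, and the proof given in that reference is exactly the one you reconstruct. Your two steps are the right ones and are carried out correctly: $\digamma(M)$-bilinearity follows from tensoriality of $\nabla$ in its subscript argument and linearity of $g$, and symmetry follows from rewriting $H^{f}(X,Y)=X(Yf)-\left( \nabla _{X}Y\right) (f)$ via metric compatibility and the identity $g\left( \func{grad}f,Z\right) =Z(f)$, after which the torsion-free property $\nabla _{X}Y-\nabla _{Y}X=[X,Y]$ makes the antisymmetric part vanish. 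Your remark that only non-degeneracy of $g$ is used, so that the signature terms $\varepsilon _{j}$ play no role, is also accurate; the argument is valid verbatim in the pseudo-Riemannian setting the paper works in.
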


\begin{corollary}
$H^{f}=0$ iff $\nabla f=\func{grad}f$ is parallel in $M$.
\end{corollary}

\begin{proof}
We assume that $H^{f}=0$. Since $g$ is non-degenerate metric, $\nabla
_{X}\left( \func{grad}f\right) =0$ for all $X,Y\in TM$. In other words, $%
\nabla f$ is parallel in $M$.

Conversely, if $\nabla f$ is parallel in $M$, then $\nabla _{X}\left( \func{%
grad}f\right) =0$ for all $X\in TM$. Hence, $H^{f}=0$. This completes the
proof.
\end{proof}

\begin{definition}
Let $\left( M,g\right) $ be a $n$-dimensional pseudo-Riemannian manifold and
let $\alpha \left( s\right) $ be a proper curve of order $n$ (non-null) in $%
M $. Let $f\in \digamma \left( M\right) $ be a eikonal function along curve $%
\alpha $, i.e. $g\left( \nabla f,\nabla f\right) $ is constant along curve $%
\alpha $. If the function%
\begin{equation*}
g\left( \nabla f,V_{n}\right)
\end{equation*}%
is non-zero constant function along $\alpha $, then $\alpha $ is called a $f$%
-eikonal $V_{n}$-slant helix curve, where $V_{n}$ is $n$-th Frenet Frame
field. And, $\nabla f$ is called the axis of the $f$-eikonal $V_{n}$-slant
helix curve $\alpha $.
\end{definition}

\begin{theorem}
Let $\left( M,g\right) $ be a $n$-dimensional pseudo-Riemannian manifold and
let $\alpha \left( s\right) $ be a proper curve of order $n$ (non-null) in $%
M $. Let us assume that $f\in \digamma \left( M\right) $ be a eikonal
function along curve $\alpha $, i.e. $g\left( \nabla f,\nabla f\right) =$%
constant along curve $\alpha $ and the Hessian $H^{f}$ $=0$. If $\alpha $ is
a $f$-eikonal $V_{n}$-slant helix curve with the axis $\nabla f$, then the
system 
\begin{equation}
g\left( V_{n-(i+1)},\nabla f\right) =H_{i}^{\ast }g\left( V_{n},\nabla
f\right) ,\text{ }i=1,...,n-2
\end{equation}%
holds, where $\left \{ V_{1},...,V_{n}\right \} $ and $\left \{ H_{1}^{\ast
},...,H_{n-2}^{\ast }\right \} $ are the Frenet frame and the harmonic
curvature functions of $\alpha $, respectively.
\end{theorem}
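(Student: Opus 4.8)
The plan is to prove the system of equations by induction on $i$, using the Frenet formulas together with the two structural hypotheses: that $f$ is eikonal with $H^f=0$ (so $\nabla f$ is parallel), and that $\alpha$ is an $f$-eikonal $V_n$-slant helix (so $g(\nabla f,V_n)$ is a nonzero constant). The central computational tool is the observation that, since $\nabla f$ is parallel, for any Frenet field $V_j$ we have
\begin{equation*}
V_1\bigl(g(V_j,\nabla f)\bigr)=g(\nabla_{V_1}V_j,\nabla f),
\end{equation*}
because the $\nabla f$ term drops out. This lets me convert derivatives of the inner products $g(V_j,\nabla f)$ into inner products of the Frenet-differentiated frame fields with $\nabla f$, which the Frenet formulas express back in terms of neighboring $g(V_{j\pm1},\nabla f)$.

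First I would establish the base case $i=1$, namely $g(V_{n-2},\nabla f)=H_1^{\ast}\,g(V_n,\nabla f)$. To do this I differentiate the defining constant $g(V_n,\nabla f)$ along the curve; using parallelism and the last Frenet equation $\nabla_{V_1}V_n=-\varepsilon_{n-2}\varepsilon_{n-1}k_{n-1}V_{n-1}$, I get
\begin{equation*}
0=V_1\bigl(g(V_n,\nabla f)\bigr)=-\varepsilon_{n-2}\varepsilon_{n-1}k_{n-1}\,g(V_{n-1},\nabla f),
\end{equation*}
which forces $g(V_{n-1},\nabla f)=0$. Next I differentiate $g(V_{n-1},\nabla f)=0$, apply the middle Frenet formula at index $i=n-1$, and solve for $g(V_{n-2},\nabla f)$; comparing the resulting coefficient with the definition $H_1^{\ast}=\varepsilon_{n-3}\varepsilon_{n-2}\,k_{n-1}/k_{n-2}$ should produce exactly the base relation. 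This is where I must be careful to match the index-shifted sign factors $\varepsilon_{\cdot}$ and curvatures $k_{\cdot}$ precisely against the recursive definition of the $H_i^{\ast}$.

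For the inductive step I would assume the identity holds for indices up to $i-1$ and differentiate the relation $g(V_{n-i},\nabla f)=H_{i-1}^{\ast}\,g(V_n,\nabla f)$. On the left, parallelism plus the Frenet formula for $\nabla_{V_1}V_{n-i}$ yields a combination of $g(V_{n-i-1},\nabla f)$ and $g(V_{n-i+1},\nabla f)$; on the right, since $g(V_n,\nabla f)$ is constant, only $H_{i-1}^{\ast\prime}$ survives. Substituting the lower-index inductive hypotheses to eliminate $g(V_{n-i+1},\nabla f)$ and then solving for $g(V_{n-i-1},\nabla f)$, I expect to recover precisely the recursion
\begin{equation*}
H_i^{\ast}=\bigl(k_{n-i}H_{i-2}^{\ast}-\nabla_{V_1}H_{i-1}^{\ast}\bigr)\frac{\varepsilon_{n-(i+2)}\varepsilon_{n-(i+1)}}{k_{n-(i+1)}},
\end{equation*}
thereby closing the induction. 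The main obstacle, as with the base case, is purely bookkeeping: correctly tracking the shifted indices on the curvatures $k_{n-(i+1)}$ and the causal signs $\varepsilon_{n-(i+2)}\varepsilon_{n-(i+1)}$ so that the rearranged Frenet-derivative identity matches the definition of $H_i^{\ast}$ verbatim. Since every curvature $k_j$ is assumed strictly positive, the divisions are legitimate and no degenerate cases arise, so the argument is a clean induction once the indices are aligned.
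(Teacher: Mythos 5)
Your proposal is correct and follows essentially the same route as the paper: establish $g(V_{n-1},\nabla f)=0$ and the base case $i=1$ by differentiating the constant $g(V_n,\nabla f)$ using the parallelism $\nabla_{V_1}\nabla f=0$ and the Frenet formulas, then close the induction by differentiating the relation at the previous index and matching the resulting coefficient with the recursive definition of $H_i^{\ast}$. The only (immaterial) difference is an index shift: the paper differentiates the relation at index $i$ to produce the one at $i+1$, while you differentiate at $i-1$ to produce $i$.
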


\begin{proof}
Since $\left \{ V_{1},...,V_{n}\right \} $ is the orthonormal frame of the
curve $\alpha $ in $M$, $\nabla f$ can be expressed in the form%
\begin{equation}
\nabla f=\lambda _{1}V_{1}+...+\lambda _{n}V_{n}\text{.}
\end{equation}%
By using the definition of $\ f$-eikonal $V_{n}$-slant helix curve and
(3.2), we get%
\begin{equation}
g\left( V_{n},\nabla f\right) =\lambda _{n}\varepsilon _{n-1}=\text{constant.%
}
\end{equation}%
If we take the derivative in each part of (3.3) in the direction $V_{1}$ in $%
M$, then we have%
\begin{equation}
g\left( \nabla _{V_{1}}\nabla f,V_{n}\right) +g\left( \nabla f,\nabla
_{V_{1}}V_{n}\right) =0\text{.}
\end{equation}%
On the other hand, from Corollary 3.1, $\nabla f$ is parallel in $M$. That
is,$\nabla _{X}\nabla f=0$ for all $X\in TM$. So, $\nabla _{V_{1}}\nabla f=0$
for $X=V_{1}$. Hence, by using (3.4) and Frenet formulas, we obtain%
\begin{equation}
-\varepsilon _{n-2}\varepsilon _{n-1}k_{n-1}g\left( \nabla f,V_{n-1}\right)
=0\text{.}
\end{equation}%
And, since $\varepsilon _{n-2}\varepsilon _{n-1}k_{n-1}$ is different from
zero, from (3.5), we get%
\begin{equation}
g\left( \nabla f,V_{n-1}\right) =0\text{.}
\end{equation}%
By taking the derivative in each part of (3.6) in the direction $V_{1}$ in $%
M $, we can write the equality%
\begin{equation}
g\left( \nabla _{V_{1}}\nabla f,V_{n-1}\right) +g\left( \nabla f,\nabla
_{V_{1}}V_{n-1}\right) =0\text{.}
\end{equation}%
And, since $\nabla _{V_{1}}\nabla f=0$, by using (3.7) and Frenet formulas,
we obtain%
\begin{equation}
-\varepsilon _{n-3}\varepsilon _{n-2}k_{n-2}g\left( \nabla f,V_{n-2}\right)
+k_{n-1}g\left( \nabla f,V_{n}\right) =0\text{.}
\end{equation}%
Therefore, from (3.8), we have%
\begin{eqnarray}
g\left( \nabla f,V_{n-2}\right) &=&\frac{k_{n-1}}{\varepsilon
_{n-3}\varepsilon _{n-2}k_{n-2}}g\left( \nabla f,V_{n}\right)  \notag \\
g\left( \nabla f,V_{n-2}\right) &=&\frac{\varepsilon _{n-3}\varepsilon _{n-2}%
}{\left( \varepsilon _{n-3}\right) ^{2}\left( \varepsilon _{n-2}\right) ^{2}}%
\text{.}\frac{k_{n-1}}{k_{n-2}}g\left( \nabla f,V_{n}\right)  \notag \\
g\left( \nabla f,V_{n-2}\right) &=&\varepsilon _{n-3}\varepsilon _{n-2}\frac{%
k_{n-1}}{k_{n-2}}g\left( \nabla f,V_{n}\right) \text{.}
\end{eqnarray}%
Moreover, since $H_{1}^{\ast }=\varepsilon _{n-3}\varepsilon _{n-2}\dfrac{%
k_{n-1}}{k_{n-2}}$, from (3.9), we can write%
\begin{equation*}
g\left( \nabla f,V_{n-2}\right) =H_{1}^{\ast }g\left( \nabla f,V_{n}\right) 
\text{.}
\end{equation*}%
It follows that the equality (3.1) is true for $i=1$. According to the
induction theory, let us assume that the equality (3.1) is true for all $k$,
where $1\leq k\leq i$ for some positive integers $i$. Then, we will prove
that the equality (3.1) is true for $i+1$. Since the equality (3.1) is true
for some positive integers $i$, we can write%
\begin{equation}
g\left( V_{n-\left( i+1\right) },\nabla f\right) =H_{i}^{\ast }g\left(
\nabla f,V_{n}\right)
\end{equation}%
for some positive integers $i$. If we take derivative in each part of (3.10)
in the direction $V_{1}$ in $M$, we have%
\begin{equation}
g\left( \nabla _{V_{1}}V_{n-(i+1)},\nabla f\right) +g\left(
V_{n-(i+1)},\nabla _{V_{1}}\nabla f\right) =V_{1}\left( H_{i}^{\ast }g\left(
\nabla f,V_{n}\right) \right) \text{.}
\end{equation}%
And, by using (3.11) and Frenet formulas, we get the equality%
\begin{eqnarray}
V_{1}\left( H_{i}^{\ast }g\left( \nabla f,V_{n}\right) \right)
&=&-\varepsilon _{n-\left( i+3\right) }\varepsilon _{n-\left( i+2\right)
}k_{n-(i+2)}g\left( V_{n-(i+2)},\nabla f\right) \\
&&+k_{n-(i+1)}g\left( V_{n-i},\nabla f\right) +g\left( V_{n-(i+1)},\nabla
_{V_{1}}\nabla f\right) \text{.}  \notag
\end{eqnarray}%
Morever, $\nabla _{V_{1}}\nabla f=0$. Hence, from (3.12), we can write%
\begin{equation}
-\varepsilon _{n-\left( i+3\right) }\varepsilon _{n-\left( i+2\right)
}k_{n-(i+2)}g\left( V_{n-(i+2)},\nabla f\right) +k_{n-(i+1)}g\left(
V_{n-i},\nabla f\right) =V_{1}\left( H_{i}^{\ast }g\left( \nabla
f,V_{n}\right) \right) \text{.}
\end{equation}%
And, from (3.13), we obtain%
\begin{eqnarray}
g\left( V_{n-(i+2)},\nabla f\right) &=&\left \{ -V_{1}\left( H_{i}^{\ast
}g\left( \nabla f,V_{n}\right) \right) +k_{n-(i+1)}g\left( V_{n-i},\nabla
f\right) \right \} . \\
&&\frac{\varepsilon _{n-\left( i+3\right) }\varepsilon _{n-\left( i+2\right)
}}{\left( \varepsilon _{n-\left( i+3\right) }\right) ^{2}\left( \varepsilon
_{n-\left( i+2\right) }\right) ^{2}}\frac{1}{k_{n-(i+2)}}  \notag \\
&=&\left \{ -V_{1}\left( H_{i}^{\ast }g\left( \nabla f,V_{n}\right) \right)
+k_{n-(i+1)}g\left( V_{n-i},\nabla f\right) \right \} \varepsilon _{n-\left(
i+3\right) }\varepsilon _{n-\left( i+2\right) }\frac{1}{k_{n-(i+2)}}\text{.}
\notag
\end{eqnarray}%
On the other hand, since the equality (3.1) is true for $i-1$ according to
the induction hypothesis, we have%
\begin{equation}
g\left( V_{n-i},\nabla f\right) =H_{i-1}^{\ast }g\left( \nabla
f,V_{n}\right) \text{.}
\end{equation}%
Therefore, by using (3.3), (3.14) and (3.15), we get 
\begin{equation}
g\left( V_{n-(i+2)},\nabla f\right) =\left \{ -V_{1}\left( H_{i}^{\ast
}\right) +k_{n-(i+1)}H_{i-1}^{\ast }\right \} \varepsilon _{n-\left(
i+3\right) }\varepsilon _{n-\left( i+2\right) }\frac{1}{k_{n-(i+2)}}g\left(
\nabla f,V_{n}\right)
\end{equation}%
Moreover, we obtain%
\begin{equation}
H_{i+1}^{\ast }=\left \{ k_{n-(i+1)}H_{i-1}^{\ast }-V_{1}\left( H_{i}^{\ast
}\right) \right \} \varepsilon _{n-\left( i+3\right) }\varepsilon _{n-\left(
i+2\right) }\frac{1}{k_{n-(i+2)}}
\end{equation}%
for $i+1$ in the Definition 3.1. So, we have%
\begin{equation*}
g\left( V_{n-(i+2)},\nabla f\right) =H_{i+1}^{\ast }g\left( \nabla
f,V_{n}\right)
\end{equation*}%
by using (3.16) and (3.17). It follows that the equality (3.1) is true for $%
i+1$. Consequently, we get%
\begin{equation*}
g\left( V_{n-(i+1)},\nabla f\right) =H_{i}^{\ast }g\left( \nabla
f,V_{n}\right)
\end{equation*}%
for all $i$ according to induction theory. This completes the proof.
\end{proof}

\begin{theorem}
Let $\left( M,g\right) $ be a $n$-dimensional pseudo-Riemannian manifold and
let $\alpha \left( s\right) $ be a proper curve of order $n$ (non-null) in $%
M $. Let us assume that $f\in \digamma \left( M\right) $ be a eikonal
function along curve $\alpha $, i.e. $g\left( \nabla f,\nabla f\right) =$%
constant along curve $\alpha $ and the Hessian $H^{f}$ $=0$. If $\alpha $ is
a $f$-eikonal $V_{n}$-slant helix curve with the axis $\nabla f$, then the
axis of the curve $\alpha $%
\begin{equation*}
\nabla f=\left \{ \varepsilon _{0}H_{n-2}^{\ast }V_{1}+...+\varepsilon
_{n-3}H_{1}^{\ast }V_{n-2}+\varepsilon _{n-1}V_{n}\right \} g\left( \nabla
f,V_{n}\right) \text{,}
\end{equation*}%
where $\left \{ V_{1},V_{2},...,V_{n}\right \} $ and $\left \{ H_{1}^{\ast
},...,H_{n-2}^{\ast }\right \} $ are the Frenet frame and the harmonic
curvatures of $\alpha $, respectively.
\end{theorem}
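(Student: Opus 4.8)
The plan is to expand the axis $\nabla f$ directly in the Frenet frame and then replace each coefficient using the relations already established in Theorem 3.1, rather than differentiating anything anew. Since $\left\{ V_{1},\ldots ,V_{n}\right\} $ is orthonormal with $g\left( V_{j},V_{j}\right) =\varepsilon _{j-1}=\pm 1$, the correct expansion of an arbitrary vector field $W$ along $\alpha $ in the pseudo-Riemannian setting carries the sign factors, namely
\[
W=\sum_{j=1}^{n}\varepsilon _{j-1}g\left( W,V_{j}\right) V_{j}.
\]
I would first record this identity (which follows by pairing both sides with each $V_{k}$ and using orthonormality together with $\left( \varepsilon _{k-1}\right) ^{2}=1$), and then apply it to the particular vector field $W=\nabla f$ along $\alpha $.

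Next I would feed in the information coming from Theorem 3.1. Reindexing its conclusion $(3.1)$ by setting $j=n-\left( i+1\right) $, so that $i=n-j-1$, gives $g\left( \nabla f,V_{j}\right) =H_{n-j-1}^{\ast }g\left( \nabla f,V_{n}\right) $ for $j=1,\ldots ,n-2$. In addition, equation $(3.6)$ inside the proof of Theorem 3.1 already tells us that $g\left( \nabla f,V_{n-1}\right) =0$, so the $V_{n-1}$-term drops out of the sum; the $j=n$ term is simply $\varepsilon _{n-1}g\left( \nabla f,V_{n}\right) V_{n}$. Substituting all of these into the expansion and factoring out the nonzero constant scalar $g\left( \nabla f,V_{n}\right) $ yields
\[
\nabla f=\left\{ \sum_{j=1}^{n-2}\varepsilon _{j-1}H_{n-j-1}^{\ast }V_{j}+\varepsilon _{n-1}V_{n}\right\} g\left( \nabla f,V_{n}\right) ,
\]
which is exactly the claimed formula once the sum is written out in the order $j=1,2,\ldots ,n-2$, so that $H_{n-2}^{\ast }$ multiplies $V_{1}$ and $H_{1}^{\ast }$ multiplies $V_{n-2}$.

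The computation is essentially routine; the only point that needs genuine care is the bookkeeping of the sign factors $\varepsilon _{j-1}$ in the orthonormal expansion, which in the Riemannian case would all equal $1$ but here must be retained, together with the correct matching of the harmonic-curvature index $H_{n-j-1}^{\ast }$ to each frame vector $V_{j}$. I expect the main (and only minor) obstacle to be making explicit that the hypotheses $g\left( \nabla f,\nabla f\right) =$ constant and $H^{f}=0$ are used, even though no fresh differentiation is performed: they enter through Theorem 3.1, whose conclusion rests on $\nabla _{X}\nabla f=0$ via Corollary 3.1, so that the present statement is obtained as a direct consequence of Theorem 3.1.
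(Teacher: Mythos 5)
Your proposal is correct and follows essentially the same route as the paper: expand $\nabla f$ in the Frenet frame, kill the $V_{n-1}$-component via $g\left( \nabla f,V_{n-1}\right) =0$, and substitute the relations of Theorem 3.1 for the remaining coefficients (the paper merely rederives $g\left( \nabla f,V_{n-1}\right) =0$ by differentiating $g\left( \nabla f,V_{n}\right) =$ constant again instead of citing it). Your index bookkeeping $j=n-\left( i+1\right) $ and the sign factors $\varepsilon _{j-1}$ in the orthonormal expansion are handled correctly.
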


\begin{proof}
Since $\alpha $ is a $f$-eikonal $V_{n}$-slant helix curve , we can write%
\begin{equation}
g\left( \nabla f,V_{n}\right) =\text{constant.}
\end{equation}%
If we take the derivative in each part of (3.18) in the direction $V_{1}$ in 
$M$, then we have%
\begin{equation}
g\left( \nabla _{V_{1}}\nabla f,V_{n}\right) +g\left( \nabla f,\nabla
_{V_{1}}V_{n}\right) =0\text{.}
\end{equation}%
On the other hand, from Corollary 3.1, $\nabla f$ is parallel in $M$. That's
why, $\nabla _{V_{1}}\nabla f=0$. Then, we obtain%
\begin{equation}
-\varepsilon _{n-2}\varepsilon _{n-1}k_{n-1}g\left( \nabla f,V_{n-1}\right)
=0
\end{equation}%
by using (3.19) and Frenet formulas. Since $\varepsilon _{n-2}\varepsilon
_{n-1}k_{n-1}$ is positive function, (3.20) implies that%
\begin{equation*}
g\left( \nabla f,V_{n-1}\right) =0\text{.}
\end{equation*}%
Hence, we can write the axis of $\alpha $ as%
\begin{equation}
\nabla f=\lambda _{1}V_{1}+\lambda _{2}V_{2}+...+\lambda
_{n-2}V_{n-2}+\lambda _{n}V_{n}\text{.}
\end{equation}%
Moreover, from (3.21), we get%
\begin{eqnarray*}
\varepsilon _{0}\lambda _{1} &=&g\left( \nabla f,V_{1}\right) \\
\varepsilon _{1}\lambda _{2} &=&g\left( \nabla f,V_{2}\right) \\
&&. \\
&&. \\
&&. \\
\varepsilon _{n-3}\lambda _{n-2} &=&g\left( \nabla f,V_{n-2}\right) \\
\varepsilon _{n-1}\lambda _{n} &=&g\left( \nabla f,V_{n}\right)
\end{eqnarray*}%
by using the metric $g$. On the other hand, from Theorem 3.1, we know that 
\begin{eqnarray}
\lambda _{1} &=&g\left( \nabla f,V_{1}\right) =\varepsilon _{0}H_{n-2}^{\ast
}g\left( \nabla f,V_{n}\right) \\
\lambda _{2} &=&g\left( \nabla f,V_{2}\right) =\varepsilon _{1}H_{n-3}^{\ast
}g\left( \nabla f,V_{n}\right)  \notag \\
&&.  \notag \\
&&.  \notag \\
&&.  \notag \\
\lambda _{n-2} &=&g\left( \nabla f,V_{n-2}\right) =\varepsilon
_{n-3}H_{1}^{\ast }g\left( \nabla f,V_{n}\right)  \notag \\
\lambda _{n} &=&\frac{1}{\varepsilon _{n-1}}g\left( \nabla f,V_{n}\right) =%
\frac{\varepsilon _{n-1}}{\left( \varepsilon _{n-1}\right) ^{2}}g\left(
\nabla f,V_{n}\right)  \notag \\
&=&\varepsilon _{n-1}g\left( \nabla f,V_{n}\right) \text{.}  \notag
\end{eqnarray}%
Thus, it can be easily obtained the axis of the curve $\alpha $ as 
\begin{equation*}
\nabla f=\left \{ \varepsilon _{0}H_{n-2}^{\ast }V_{1}+...+\varepsilon
_{n-3}H_{1}^{\ast }V_{n-2}+\varepsilon _{n-1}V_{n}\right \} g\left( \nabla
f,V_{n}\right) \text{,}
\end{equation*}%
by making use of the equality (3.21) and the system (3.22). This completes
the proof.
\end{proof}

\begin{theorem}
Let $\left( M,g\right) $ be a $n$-dimensional pseudo-Riemannian manifold and
let $\alpha \left( s\right) $ be a proper curve of order $n$ (non-null) in $%
M $. Let us assume that $f\in \digamma \left( M\right) $ be a eikonal
function along curve $\alpha $, i.e. $g\left( \nabla f,\nabla f\right) =$%
constant along curve $\alpha $ and the Hessian $H^{f}$ $=0$. If $\alpha $ is
a $f$-eikonal $V_{n}$-slant helix curve, then $H_{n-2}^{\ast }\neq 0$ and $%
\varepsilon _{n-3}H_{1}^{\ast 2}+\varepsilon _{n-4}H_{2}^{\ast
2}+...+\varepsilon _{0}H_{n-2}^{\ast 2}$ is non-zero constant, where $%
\left
\{ H_{1}^{\ast },...,H_{n-2}^{\ast }\right \} $ is the harmonic
curvatures of $\alpha $.
\end{theorem}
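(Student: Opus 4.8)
The plan is to substitute the coordinate expression for $\nabla f$ supplied by Theorem 3.2 into the eikonal equation $g(\nabla f,\nabla f)=\text{const}$. Set $c=g(\nabla f,V_{n})$, which is a non-zero constant by the definition of an $f$-eikonal $V_{n}$-slant helix. By Theorem 3.2 one has $\nabla f=\sum_{j=1}^{n}\lambda _{j}V_{j}$ with $\lambda _{j}=\varepsilon _{j-1}H_{n-1-j}^{\ast }c$ for $1\leq j\leq n-2$, $\lambda _{n-1}=0$, and $\lambda _{n}=\varepsilon _{n-1}c$. Since the Frenet frame is orthogonal with $g(V_{j},V_{j})=\varepsilon _{j-1}$, I would expand $g(\nabla f,\nabla f)=\sum_{j}\varepsilon _{j-1}\lambda _{j}^{2}$, simplify using $\varepsilon _{j-1}^{3}=\varepsilon _{j-1}$, and reindex by $k=n-1-j$ to obtain the key identity
\[
g(\nabla f,\nabla f)=c^{2}\left( \varepsilon _{n-3}H_{1}^{\ast 2}+\varepsilon _{n-4}H_{2}^{\ast 2}+\cdots +\varepsilon _{0}H_{n-2}^{\ast 2}+\varepsilon _{n-1}\right) .
\]
This core computation carries the whole proof.

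Writing $S=\varepsilon _{n-3}H_{1}^{\ast 2}+\varepsilon _{n-4}H_{2}^{\ast 2}+\cdots +\varepsilon _{0}H_{n-2}^{\ast 2}$, the constancy claim is then immediate: $g(\nabla f,\nabla f)$ is constant by the eikonal hypothesis, while $c$ and $\varepsilon _{n-1}$ are constants, so $S=g(\nabla f,\nabla f)/c^{2}-\varepsilon _{n-1}$ is constant.

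For $H_{n-2}^{\ast }\neq 0$ I would argue by contradiction. If $H_{n-2}^{\ast }=0$, then the instance $i=n-2$ of Theorem 3.1 gives $g(\nabla f,V_{1})=H_{n-2}^{\ast }c=0$. Using the parallelism $\nabla _{V_{1}}\nabla f=0$ from Corollary 3.1 together with the Frenet formulas, differentiation of $g(\nabla f,V_{i})=0$ along $V_{1}$ reduces to $k_{i}\,g(\nabla f,V_{i+1})=0$ once the previously obtained relation $g(\nabla f,V_{i-1})=0$ is inserted, whence $g(\nabla f,V_{i+1})=0$. Propagating this from $j=1$ up to $g(\nabla f,V_{n-1})=0$ and differentiating once more leaves $k_{n-1}\,g(\nabla f,V_{n})=0$, i.e. $c=0$, a contradiction. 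Hence $H_{n-2}^{\ast }\neq 0$.

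The non-vanishing of $S$ is where I expect the real difficulty. Rearranging the key identity gives $c^{2}S=g(\nabla f,\nabla f)-\varepsilon _{n-1}c^{2}=g(w,w)$, where $w=\nabla f-\varepsilon _{n-1}cV_{n}=\sum_{j=1}^{n-2}\lambda _{j}V_{j}$. Since $\lambda _{1}=\varepsilon _{0}H_{n-2}^{\ast }c\neq 0$ by the step just proved, $w$ is a non-zero vector, so in a Riemannian metric $g(w,w)>0$ and $S\neq 0$ follows at once. In the genuinely pseudo-Riemannian setting, however, a non-zero $w$ may be null, so the crux is to exclude $g(w,w)=0$; I would close this last gap by appealing to the non-degeneracy of the metric restricted to $\mathrm{span}\{V_{1},\dots ,V_{n-2}\}$, or to an auxiliary consequence of the parallelism of $\nabla f$.
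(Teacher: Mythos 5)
Your computation of the key identity $g(\nabla f,\nabla f)=c^{2}\bigl(\varepsilon _{n-3}H_{1}^{\ast 2}+\cdots +\varepsilon _{0}H_{n-2}^{\ast 2}+\varepsilon _{n-1}\bigr)$ and the deduction of constancy from it is exactly the paper's argument (its equation (3.24)). Your contradiction argument for $H_{n-2}^{\ast }\neq 0$ is a minor variant of the paper's: both propagate $g(\nabla f,V_{j})=0$ up the Frenet frame using $\nabla _{V_{1}}\nabla f=0$; the paper stops by converting $g(\nabla f,V_{j})=0$ back into $H_{n-1-j}^{\ast }=0$ via Theorem 3.1 and reaching $H_{1}^{\ast }=\varepsilon _{n-3}\varepsilon _{n-2}k_{n-1}/k_{n-2}\neq 0$, while you push one step further to $k_{n-1}\,g(\nabla f,V_{n})=0$ and contradict $c\neq 0$ directly. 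Both terminations are valid, so up to this point your proof is essentially the paper's.

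The one genuine gap is the one you yourself flag: the non-vanishing of $S=\varepsilon _{n-3}H_{1}^{\ast 2}+\cdots +\varepsilon _{0}H_{n-2}^{\ast 2}$. You leave this open, and your proposed repairs would not close it: non-degeneracy of $g$ restricted to $\mathrm{span}\{V_{1},\dots ,V_{n-2}\}$ does not exclude null vectors when that restriction is indefinite, which is precisely the situation in which $S$ could vanish despite $\lambda _{1}=\varepsilon _{0}H_{n-2}^{\ast }c\neq 0$; and the parallelism of $\nabla f$ has already been fully spent in deriving the identity. You should be aware, however, that the paper does not close this gap either --- its proof establishes only that $S$ is constant and that $H_{n-2}^{\ast }\neq 0$, and then silently asserts the ``non-zero'' part of the conclusion. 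In the strictly Riemannian case (all $\varepsilon _{i}=1$) your observation that $S=g(w,w)/c^{2}>0$ with $w=\nabla f-\varepsilon _{n-1}cV_{n}\neq 0$ does finish the job, and that is more than the paper offers; in the genuinely indefinite case the statement as written appears to need either an additional hypothesis or a different argument, so your honest identification of the difficulty is the correct assessment rather than a defect relative to the source.
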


\begin{proof}
Let $\alpha $ be a $f$-eikonal $V_{n}$-slant helix curve and $\left \{
V_{1},...,V_{n}\right \} $ be the Frenet frame of $\alpha $. Then, from
Theorem 3.2, we know that 
\begin{equation}
\nabla f=\left \{ \varepsilon _{0}H_{n-2}^{\ast }V_{1}+...+\varepsilon
_{n-3}H_{1}^{\ast }V_{n-2}+\varepsilon _{n-1}V_{n}\right \} g\left( \nabla
f,V_{n}\right) \text{.}
\end{equation}%
Therefore, from (3.23), we can write%
\begin{equation}
g\left( \nabla f,\nabla f\right) =\left( g\left( \nabla f,V_{n}\right)
\right) ^{2}\left( \varepsilon _{0}^{3}H_{n-2}^{\ast 2}+...+\varepsilon
_{n-3}^{3}H_{1}^{\ast 2}+\varepsilon _{n-1}^{3}\right) \text{.}
\end{equation}%
Moreover, by the definition of metric tensor, we have 
\begin{equation*}
\left \vert g\left( \nabla f,\nabla f\right) \right \vert =\left \Vert
\nabla f\right \Vert ^{2}\text{.}
\end{equation*}%
According to this Theorem, $\alpha $ is a $f$-eikonal $V_{n}$-slant helix
curve. So, $\left \Vert \nabla f\right \Vert =$constant and $g\left( \nabla
f,V_{n}\right) $ $=$ non-zero constant along $\alpha $. Hence, from (3.24),
we obtain%
\begin{equation*}
\varepsilon _{0}^{3}H_{n-2}^{\ast 2}+...+\varepsilon _{n-3}^{3}H_{1}^{\ast
2}+\varepsilon _{n-1}^{3}=\text{constant.}
\end{equation*}%
In other words,%
\begin{equation*}
\varepsilon _{0}H_{n-2}^{\ast 2}+...+\varepsilon _{n-3}H_{1}^{\ast 2}=\text{%
constant.}
\end{equation*}%
Now, we will show that $H_{n-2}^{\ast }\neq 0$ . We assume that $%
H_{n-2}^{\ast }=0$. Then, for $i=n-2$ in (3.1),%
\begin{equation}
g\left( V_{1},\nabla f\right) =H_{n-2}^{\ast }g\left( \nabla f,V_{n}\right)
=0\text{.}
\end{equation}%
If we take derivative in each part of (3.25) in the direction $V_{1}$ on $M$%
, then we have%
\begin{equation}
g\left( \nabla _{V_{1}}V_{1},\nabla f\right) +g\left( V_{1},\nabla
_{V_{1}}\nabla f\right) =0\text{.}
\end{equation}%
On the other hand, from Corollary 3.1, $\nabla f$ is parallel in $M$. That's
why $\nabla _{V_{1}}\nabla f=0$. Then, from (3.26), we have $g\left( \nabla
_{V_{1}}V_{1},\nabla f\right) =k_{1}g\left( V_{2},\nabla f\right) =0$ by
using the Frenet formulas. Since $k_{1}$ is positive, $g\left( V_{2},\nabla
f\right) =0$. Now, for $i=n-3$ in (3.1),%
\begin{equation*}
g\left( V_{2},\nabla f\right) =H_{n-3}^{\ast }g\left( V_{n},\nabla f\right) 
\text{.}
\end{equation*}%
And, since $g\left( V_{2},\nabla f\right) $ $=0$, $H_{n-3}^{\ast }=0$.
Continuing this process, we get $H_{1}^{\ast }=0$. Let us recall that $%
H_{1}^{\ast }=\varepsilon _{n-3}\varepsilon _{n-2}\frac{k_{n-1}}{k_{n-2}}$,
thus we have a contradiction because all the curvatures are nowhere zero.
Consequently, $H_{n-2}^{\ast }\neq 0$. This completes the proof.
\end{proof}

\begin{lemma}
Let $\left( M,g\right) $ be a $n$-dimensional pseudo-Riemannian manifold and
let $\alpha \left( s\right) $ be a proper curve of order $n$ (non-null) in $%
M $. Let us assume that $H_{n-2}^{\ast }\neq 0$ for $i=n-2$. Then, $%
\varepsilon _{n-3}H_{1}^{\ast 2}+\varepsilon _{n-4}H_{2}^{\ast
2}+...+\varepsilon _{0}H_{n-2}^{\ast 2}$ is non-zero constant if and only if 
$V_{1}\left( H_{n-2}^{\ast }\right) =H_{n-2}^{\ast \prime
}=k_{1}H_{n-3}^{\ast }$, where $V_{1}$ and $\left \{ H_{1}^{\ast
},...,H_{n-2}^{\ast }\right \} $ are the unit tangent vector field and the
harmonic curvatures of $\alpha $, respectively.
\end{lemma}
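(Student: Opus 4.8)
The plan is to turn the whole statement into a single derivative computation. Write
\[
S:=\varepsilon_{n-3}H_{1}^{\ast 2}+\varepsilon_{n-4}H_{2}^{\ast 2}+\cdots+\varepsilon_{0}H_{n-2}^{\ast 2}=\sum_{i=1}^{n-2}\varepsilon_{n-(i+2)}H_{i}^{\ast 2}.
\]
Since $S$ is constant along $\alpha$ exactly when $V_{1}(S)=S'=0$, I would differentiate with respect to arclength to get $S'=2\sum_{i=1}^{n-2}\varepsilon_{n-(i+2)}H_{i}^{\ast}H_{i}^{\ast\prime}$, so that the entire lemma reduces to the algebraic task of substituting the correct expression for each $H_{i}^{\ast\prime}$ and showing that almost everything cancels.

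The expressions for $H_{i}^{\ast\prime}$ come from the recursion in Definition 3.1. Solving that recursion for the derivative and using $\varepsilon_{j}^{2}=1$, I would first record, for $1\le i\le n-3$,
\[
H_{i}^{\ast\prime}=k_{n-(i+1)}H_{i-1}^{\ast}-\varepsilon_{n-(i+3)}\varepsilon_{n-(i+2)}\,k_{n-(i+2)}H_{i+1}^{\ast}.
\]
The crucial structural point is that this relation is available only up to $i=n-3$: the top derivative $H_{n-2}^{\ast\prime}$ is not governed by the recursion and must be carried along untouched. Accordingly I would peel the $i=n-2$ summand (whose coefficient is $\varepsilon_{n-(n-2+2)}=\varepsilon_{0}$) off of $S'$ and substitute the displayed formula only into the remaining sum $\sum_{i=1}^{n-3}$.

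The heart of the argument is then a cancellation. Substituting produces two sums, one built from the products $H_{i}^{\ast}H_{i-1}^{\ast}$ and one from $H_{i}^{\ast}H_{i+1}^{\ast}$; after shifting the index in the first sum they run over the same range with the same factor $k_{n-(i+2)}H_{i}^{\ast}H_{i+1}^{\ast}$, and their sign coefficients coincide precisely because $\varepsilon_{n-(i+2)}^{2}=1$. Hence the two interior sums annihilate one another and only the single boundary term produced by the split at $i=n-3$ survives, giving
\[
S'=2\varepsilon_{0}H_{n-2}^{\ast}\bigl(H_{n-2}^{\ast\prime}-k_{1}H_{n-3}^{\ast}\bigr).
\]
The careful index bookkeeping in this step — in particular checking that the $\varepsilon$-factors of the two interior sums really match after the shift and that exactly one boundary term is left — is the part I expect to be the main obstacle; everything else is routine.

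With this identity the equivalence is immediate. Since $\varepsilon_{0}=\pm1\neq0$ and, by hypothesis, $H_{n-2}^{\ast}\neq0$, the factorisation shows $S'=0$ if and only if $H_{n-2}^{\ast\prime}=k_{1}H_{n-3}^{\ast}$, i.e. $S$ is constant exactly under the stated condition. The ``non-zero'' half of the claim is free in the forward direction, since a non-zero constant is in particular constant; for the converse I would note that $H_{n-2}^{\ast}\neq0$ (together with $H_{1}^{\ast}=\varepsilon_{n-3}\varepsilon_{n-2}k_{n-1}/k_{n-2}\neq0$) keeps $S$ genuinely non-trivial, flagging that in the Riemannian case all $\varepsilon_{j}=1$ makes $S\ge H_{n-2}^{\ast 2}>0$ automatic, whereas in indefinite signature the non-vanishing of the constant $S$ is the one point that must be argued from the sign pattern $\{\varepsilon_{j}\}$ rather than read off directly.
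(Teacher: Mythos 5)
Your proof is correct and follows essentially the same route as the paper's: differentiate the sum, substitute the derivative relations extracted from the recursion defining the $H_{i}^{\ast}$, and telescope, with your single factorized identity $S'=2\varepsilon_{0}H_{n-2}^{\ast}\bigl(H_{n-2}^{\ast\prime}-k_{1}H_{n-3}^{\ast}\bigr)$ merely packaging into one computation what the paper carries out separately in each direction. The one point you honestly flag but do not close --- that in indefinite signature the constancy of $S$ does not by itself yield $S\neq 0$ --- is likewise left unargued in the paper's own proof, which simply asserts the constant is non-zero.
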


\begin{proof}
First,we assume that $\varepsilon _{n-3}H_{1}^{\ast 2}+\varepsilon
_{n-4}H_{2}^{\ast 2}+...+\varepsilon _{0}H_{n-2}^{\ast 2}$ is non-zero
constant. Consider the following functions given in Definition 3.1%
\begin{equation*}
H_{i}^{\ast }=\left( k_{n-i}H_{i-2}^{\ast }-H_{i-1}^{\ast \prime }\right) 
\frac{\varepsilon _{n-\left( i+2\right) }\varepsilon _{n-\left( i+1\right) }%
}{k_{n-\left( i+1\right) }}
\end{equation*}%
for $3\leq i\leq n-2$. So, from the equality, we can write%
\begin{equation}
k_{n-\left( i+1\right) }H_{i}^{\ast }=\varepsilon _{n-\left( i+2\right)
}\varepsilon _{n-\left( i+1\right) }\left( k_{n-i}H_{i-2}^{\ast
}-H_{i-1}^{\ast \prime }\right) \text{.}
\end{equation}%
Hence, in (3.27), if we take $i+1$ instead of $i$, we get%
\begin{equation}
\varepsilon _{n-\left( i+3\right) }\varepsilon _{n-\left( i+2\right)
}H_{i}^{\ast \prime }=\varepsilon _{n-\left( i+3\right) }\varepsilon
_{n-\left( i+2\right) }k_{n-\left( i+1\right) }H_{i-1}^{\ast }-k_{n-\left(
i+2\right) }H_{i+1}^{\ast },\text{ }2\leq i\leq n-3
\end{equation}%
together with%
\begin{equation*}
H_{1}^{\ast \prime }=-\frac{1}{\varepsilon _{n-4}\varepsilon _{n-3}}%
k_{n-3}H_{2}^{\ast }
\end{equation*}%
or 
\begin{equation}
H_{1}^{\ast \prime }=-\varepsilon _{n-4}\varepsilon _{n-3}k_{n-3}H_{2}^{\ast
}\text{.}
\end{equation}%
On the other hand, since $\varepsilon _{n-3}H_{1}^{\ast 2}+\varepsilon
_{n-4}H_{2}^{\ast 2}+...+\varepsilon _{0}H_{n-2}^{\ast 2}$ is constant, we
have%
\begin{equation*}
\varepsilon _{n-3}H_{1}^{\ast }H_{1}^{\ast \prime }+\varepsilon
_{n-4}H_{2}^{\ast }H_{2}^{\ast \prime }+...+\varepsilon _{0}H_{n-2}^{\ast
}H_{n-2}^{\ast \prime }=0
\end{equation*}%
and so, 
\begin{equation}
\varepsilon _{0}H_{n-2}^{\ast }H_{n-2}^{\ast \prime }=-\varepsilon
_{n-3}H_{1}^{\ast }H_{1}^{\ast \prime }-\varepsilon _{n-4}H_{2}^{\ast
}H_{2}^{\ast \prime }-...-\varepsilon _{1}H_{n-3}^{\ast }H_{n-3}^{\ast
\prime }\text{.}
\end{equation}%
By using (3.28) and (3.29), we obtain 
\begin{equation}
H_{1}^{\ast }H_{1}^{\ast \prime }=-\varepsilon _{n-4}\varepsilon
_{n-3}k_{n-3}H_{1}^{\ast }H_{2}^{\ast }
\end{equation}%
and%
\begin{equation}
\varepsilon _{n-\left( i+3\right) }\varepsilon _{n-\left( i+2\right)
}H_{i}^{\ast }H_{i}^{\ast \prime }=\varepsilon _{n-\left( i+3\right)
}\varepsilon _{n-\left( i+2\right) }k_{n-\left( i+1\right) }H_{i-1}^{\ast
}H_{i}^{\ast }-k_{n-\left( i+2\right) }H_{i}^{\ast }H_{i+1}^{\ast },\text{ }%
2\leq i\leq n-3\text{.}
\end{equation}%
Therefore, by using (3.30), (3.31) and (3.32), an algebraic calculus shows
that 
\begin{equation*}
\varepsilon _{0}H_{n-2}^{\ast }H_{n-2}^{\ast \prime }=\varepsilon
_{0}k_{1}H_{n-3}^{\ast }H_{n-2}^{\ast }
\end{equation*}%
or%
\begin{equation*}
H_{n-2}^{\ast }H_{n-2}^{\ast \prime }=k_{1}H_{n-3}^{\ast }H_{n-2}^{\ast }%
\text{.}
\end{equation*}%
Since $H_{n-2}^{\ast }\neq 0$, we get the relation%
\begin{equation*}
H_{n-2}^{\ast \prime }=k_{1}H_{n-3}^{\ast }\text{.}
\end{equation*}

Conversely, we assume that%
\begin{equation}
H_{n-2}^{\ast \prime }=k_{1}H_{n-3}^{\ast }\text{.}
\end{equation}%
By using (3.33) and $H_{n-2}^{\ast }\neq 0$, we can write%
\begin{equation}
H_{n-2}^{\ast }H_{n-2}^{\ast \prime }=k_{1}H_{n-2}^{\ast }H_{n-3}^{\ast }
\end{equation}%
From (3.32), we have the following equation sysytem:%
\begin{eqnarray*}
\text{for }i &=&n-3\text{, \  \  \  \  \ }\varepsilon _{1}H_{n-3}^{\ast
}H_{n-3}^{\ast \prime }=\varepsilon _{1}k_{2}H_{n-4}^{\ast }H_{n-3}^{\ast
}-\varepsilon _{0}k_{1}H_{n-3}^{\ast }H_{n-2}^{\ast }\text{,} \\
\text{for }i &=&n-4\text{, \  \  \  \  \ }\varepsilon _{2}H_{n-4}^{\ast
}H_{n-4}^{\ast \prime }=\varepsilon _{2}k_{3}H_{n-5}^{\ast }H_{n-4}^{\ast
}-\varepsilon _{1}k_{2}H_{n-4}^{\ast }H_{n-3}^{\ast }\text{,} \\
\text{for }i &=&n-5\text{, \  \  \  \  \ }\varepsilon _{3}H_{n-5}^{\ast
}H_{n-5}^{\ast \prime }=\varepsilon _{3}k_{4}H_{n-6}^{\ast }H_{n-5}^{\ast
}-\varepsilon _{2}k_{3}H_{n-5}^{\ast }H_{n-4}^{\ast }\text{,} \\
&&\cdot \\
&&\cdot \\
&&\cdot \\
\text{for }i &=&2\text{, \  \  \  \  \  \  \  \  \ }\varepsilon _{n-4}H_{2}^{\ast
}H_{2}^{\ast \prime }=\varepsilon _{n-4}k_{n-3}H_{1}^{\ast }H_{2}^{\ast
}-\varepsilon _{n-5}k_{n-4}H_{2}^{\ast }H_{3}^{\ast }\text{ .}
\end{eqnarray*}%
Moreover, from (3.31) and (3.34), we obtain%
\begin{equation}
\varepsilon _{n-3}H_{1}^{\ast }H_{1}^{\ast \prime }=-\varepsilon
_{n-4}k_{n-3}H_{1}^{\ast }H_{2}^{\ast }
\end{equation}%
and%
\begin{equation}
\varepsilon _{0}H_{n-2}^{\ast }H_{n-2}^{\ast \prime }=\varepsilon
_{0}k_{1}H_{n-2}^{\ast }H_{n-3}^{\ast }\text{.}
\end{equation}%
So, by using the above equation system, (3.35) and (3.36), an algebraic
calculus shows that%
\begin{equation}
\varepsilon _{n-3}H_{1}^{\ast }H_{1}^{\ast \prime }+\varepsilon
_{n-4}H_{2}^{\ast }H_{2}^{\ast \prime }+...+\varepsilon _{0}H_{n-2}^{\ast
}H_{n-2}^{\ast \prime }=0\text{.}
\end{equation}%
And, by integrating (3.37), we can easily say that%
\begin{equation*}
\varepsilon _{n-3}H_{1}^{\ast 2}+\varepsilon _{n-4}H_{2}^{\ast
2}+...+\varepsilon _{0}H_{n-2}^{\ast 2}
\end{equation*}%
is a non-zero constant. This completes the proof.
\end{proof}

\begin{corollary}
Let $\left( M,g\right) $ be a $n$-dimensional pseudo-Riemannian manifold and
let $\alpha \left( s\right) $ be a proper curve of order $n$ (non-null) in $%
M $. Let us assume that $f\in \digamma \left( M\right) $ be a eikonal
function along curve $\alpha $, i.e. $g\left( \nabla f,\nabla f\right) =$%
constant along curve $\alpha $ and the Hessian $H^{f}$ $=0$. If $\alpha $ is
a $f$-eikonal $V_{n}$-slant helix curve, $V_{1}\left( H_{n-2}^{\ast }\right)
=H_{n-2}^{\ast \prime }=k_{1}H_{n-3}^{\ast }$.
\end{corollary}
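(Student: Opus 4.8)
The plan is to obtain this statement as an immediate consequence of the two results that precede it, namely Theorem 3.3 and Lemma 3.2, with essentially no new computation required. First I would invoke Theorem 3.3: the hypotheses of the Corollary (that $\left(M,g\right)$ is an $n$-dimensional pseudo-Riemannian manifold, $\alpha$ a proper curve of order $n$, $f$ eikonal along $\alpha$ with $H^{f}=0$, and $\alpha$ an $f$-eikonal $V_{n}$-slant helix curve) are verbatim those of Theorem 3.3. I therefore conclude at once that $H_{n-2}^{\ast }\neq 0$ and that the quantity
\begin{equation*}
\varepsilon _{n-3}H_{1}^{\ast 2}+\varepsilon _{n-4}H_{2}^{\ast 2}+\cdots +\varepsilon _{0}H_{n-2}^{\ast 2}
\end{equation*}
is a non-zero constant along $\alpha$.

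Next I would feed these two conclusions into Lemma 3.2. Because $H_{n-2}^{\ast }\neq 0$, the hypothesis ``$H_{n-2}^{\ast }\neq 0$ for $i=n-2$'' of the Lemma is met, so the Lemma applies. The Lemma asserts that, under this condition, the sum $\varepsilon _{n-3}H_{1}^{\ast 2}+\cdots +\varepsilon _{0}H_{n-2}^{\ast 2}$ is a non-zero constant \emph{if and only if} $H_{n-2}^{\ast \prime }=k_{1}H_{n-3}^{\ast }$. I would then apply the forward (``only if'') direction: having just established from Theorem 3.3 that this sum is a non-zero constant, I read off $V_{1}\left( H_{n-2}^{\ast }\right) =H_{n-2}^{\ast \prime }=k_{1}H_{n-3}^{\ast }$, which is precisely the assertion of the Corollary.

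Since the Corollary is a pure composition of Theorem 3.3 with the forward implication of Lemma 3.2, there is no genuine obstacle: the only thing to check is that the hypothesis lists coincide and that the conclusion of Theorem 3.3 matches the left-hand side of the biconditional in Lemma 3.2, both of which hold as written. In the write-up I would make the two citations explicit and remark that the clause $H_{n-2}^{\ast }\neq 0$ supplied by Theorem 3.3 is exactly what legitimizes the division by $H_{n-2}^{\ast }$ performed inside the proof of Lemma 3.2, so that the two results chain together without any loss.
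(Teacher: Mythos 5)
Your proposal is correct and follows exactly the paper's own argument: the paper's proof reads ``It is obvious by using Theorem 3.3 and Lemma 3.2,'' and you have simply spelled out that composition (Theorem 3.3 supplies both $H_{n-2}^{\ast }\neq 0$ and the constancy of $\varepsilon _{n-3}H_{1}^{\ast 2}+\cdots +\varepsilon _{0}H_{n-2}^{\ast 2}$, and the forward direction of Lemma 3.2 then yields $H_{n-2}^{\ast \prime }=k_{1}H_{n-3}^{\ast }$). No further comment is needed.
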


\begin{proof}
It is obvious by using Theorem 3.3 and Lemma 3.2.
\end{proof}

\begin{center}
\textbf{4. Conclusions}
\end{center}

In this work, it is defined $f$-eikonal $V_{n}$-slant helix by the gradient
vector field $\nabla f$ and $\nabla f$ is called as the axis of the eikonal
slant helix. Besides, it is given new characterizations on eikonal slant
helices by using the harmonic curvature functions in $n$-dimensional
pseudo-Riemannian manifolds.

On the other hand, we want to emphasize an important point. The axis $\nabla
f$ defined in this work is non-constant. If the axis $\nabla f$ \ is
considered as a constant vector field, then the eikonal slant helix defined
in this paper coincides with $V_{n}$-slant helix which is introduced in \cite%
{gok1}. Also, if $\nabla f$ is a Levi-Civita parallel vector field, then
eikonal slant helix is a LC-slant helix defined by in \cite{oz, ali2}


\begin{thebibliography}{99}
\bibitem{ali} Ali, T. A. and Lopez, R., \textit{Some characterizations of
inclined curves in Euclidean }$E^{n}$\textit{\ space}, Novi Sad J. Math. 40
(2010), 9-17.

\bibitem{ali1} Ali, T. A. and Turgut, M.,\textit{Some characterizations of
slant helices in the Euclidean space }$E^{n}$, Hacet. J. Math. Stat. 39
(2010), 327-336.

\bibitem{neill} B.O'Neill., \textit{Semi-Riemannian Geometry with
Applications to Relativity}, Academic Press, 1983.

\bibitem{scala} Di Scala, A.J., Ruiz-Hern\'{a}ndez, G., \textit{Higher
codimensional euclidean helix submanifolds}, Kodai Math. J. 33, (2010)
192-210.

\bibitem{Fischer} Fischer, A.E., \textit{Riemannian maps between Riemannian
manifolds}, Contemporary Mathematics, Vol 182, (1992), pp. 342.

\bibitem{gok} G\"{o}k, \.{I}, Camc\i , C and Hac\i saliho\u{g}lu, H.H., $%
V_{n}$\textit{-slant helices in Euclidean }$n-$\textit{space }$E^{n}$, Math.
Commun., Vol. 14 No. 2 (2009), pp. 317-329.

\bibitem{gok1} G\"{o}k, \.{I}, Camc\i , C and Hac\i saliho\u{g}lu, H.H., $%
V_{n}$\textit{-slant helices in Minkowski }$n-$\textit{space }$E^{n}$,
Commun. Fac. Sci. Univ. Ank. Series A1, Volume 58, Number 1, Pages 29--38
(2009), ISSN 1303--5991.

\bibitem{izumiya} Izumiya, S., Takeuchi, N., \textit{New special curves and
developable surfaces}, Turk J.Math., 28 (2), 531-537, 2004.

\bibitem{jain} Jain A, Wang G, Vasquez KM. \textit{DNA triple helices:
biological consequences and therapeutic potential.} Biochimie 2008.
doi:10.1016/j.biochi.2008.02.011.

\bibitem{klah} K\"{u}lahc\i , M., Bekta\c{s}, M. and Ergut, M., \textit{On
harmonic curvatures of a Frenet curve in Lorentzian space}, Chaos, Solitons
and Fractals 41 (2009) 1668--1675.

\bibitem{Lancret} Lancret, M.A., \textit{Memoire sur les courbes a double
courbure}, Memoires presentes a 1'Institut1, 416-454 (1806).

\bibitem{onder} \"{O}nder M, Kazaz M, Kocayi\u{g}it H and K\i l\i \c{c} O., $%
B_{2}-$\textit{slant helix in Euclidean 4-space }$E^{4}$, Int. J. Cont.
Math. Sci. vol. 3, no.29 (2008), 1433-1440.

\bibitem{haci} \"{O}zdamar E, Hac\i saliho\u{g}lu H.H., \textit{A
characterization of inclined curves in Euclidean }$n$\textit{-space},
Communication de la faculte des sciences de L' Universite d' Ankara, series
A1, 24AA (1975),15-22.

\bibitem{oz} \"{O}zkaldi, S., G\"{o}k, \.{I}., Yayl\i , Y. and Hac\i saliho%
\u{g}lu H.H., \textit{LC slant helices on hypersurfaces in Minkowski space} $%
E_{1}^{n+1},$ TWMS J. Pure Appl. Math. V.1, N.2, 2010, pp. 137-145.

\bibitem{scarr} Scarr, G., \textit{Helical tensegrity as a structural
mechanism in human anatomy}, International Journal of Osteopathic Medicine
14, (2011) 24-32.

\bibitem{song} Song, H.H., \textit{On Proper Helices in Pseudo-Riemannian
Submanifolds}, J.Geom. 91 (2008), 150-168.

\bibitem{Struik} Struik, D.J., \textit{Lectures on Classical Differential
Geometry}, Dover, New-York, 1988.

\bibitem{zip} \c{S}enol, A., Z\i plar, E., Yayl\i , Y. and G\"{o}k, \.{I}., 
\textit{A new approach on helices in Euclidean }$n$\textit{-space}, Math.
Commun. 18(2013), 241-256.

\bibitem{ali2} \c{S}enol, A., and Yayl\i , Y., \textit{LC helices in space
forms}, Chaos, Solitons and Fractals 42 (2009) 2115--2119.

\bibitem{yin} Yin Y, Zhang T, Yang F, Qiu X. \textit{Geometric conditions
for fractal super carbon nanotubes with strict self-similarities}. Chaos,
Solitons \& Fractals, 2008;37(5):1257--66.
\end{thebibliography}
\end{document}